\DeclareMathAlphabet{\mathcal}{OMS}{cmsy}{m}{n}
\SetMathAlphabet{\mathcal}{bold}{OMS}{cmsy}{b}{n}
\title{\LARGE \bf
Fully decentralized conditions for local convergence of DC/AC converter network based on matching control}
\author{Taouba Jouini$^{1}$, Zhiyong Sun$^{2}$ 
	\thanks{*This work has received funding from the European Research Council (ERC) under the European Union's Horizon 2020 research  and innovation program (grant agreement No: 834142).}%
	\thanks{$^{1}$Taouba Jouini is with the Department of Automatic Control, LTH, Lund University,
		Ole Römers väg 1,  22363 Lund, Sweden. 
				$^{2}$Zhiyong Sun is with Department of Electrical Engineering,  Eindhoven University of Technology, the Netherlands.
		E-mails:
		\tt\small taouba.jouini@control.lth.se, z.sun@tue.nl.}}%
\newcommand\scalemath[2]{\scalebox{#1}{\mbox{\ensuremath{\displaystyle #2}}}}
\newcommand\oprocendsymbol{\hbox{$\blacksquare$}}
\newcommand\oprocend{\relax\ifmmode\else\unskip\hfill\fi\oprocendsymbol}
\newcommand{\real}[0]{\mathbb R}
\newtheorem{theorem}{Theorem}[section]
\newtheorem{lemma}[theorem]{Lemma}
\newtheorem{definition}[theorem]{Definition}
\newtheorem{assumption}[]{Assumption}
\newtheorem{condition}[theorem]{Condition}
\newcommand{\tb}[0]{\color{blue}}
\newcommand\rout{\bgroup\markoverwith{\textcolor{red}{/}}\ULon} 
\begin{document}

\maketitle
\thispagestyle{empty}
\pagestyle{empty}

\begin{abstract}
We investigate local convergence of identical DC/AC converters interconnected via identical resistive and inductive lines towards a synchronous equilibrium manifold. We exploit the symmetry of the resulting vector field and develop a Lyapunov-based framework, in which we measure the distance of the solutions of the nonlinear power system model to the equilibrium manifold by analyzing the evolution of their tangent vectors. We derive sufficient and fully decentralized conditions to characterize the  equilibria of interest, and provide an estimate of their region of contraction. We provide ways to satisfy these conditions and illustrate our results based on numerical simulations of a two-converter benchmark.
\end{abstract}


\section{INTRODUCTION}
In the advent of high penetration of renewable energy resources in the electrical network \cite{farrokhabadi2019microgrid}, power system stability remains at the heart of the understanding of the ramifications of these unprecedented changes affecting the generation, operation and distribution of energy, where power electronic DC/AC converters, play key role in maintaining reliable power supply.
\paragraph{Literature review}
Despite their intrinsic differences, synchronous machines and DC/AC converters share structural similarities, which are often exploited to design efficient control strategies that endow resilience to the electrical grid. Thus, different schemes for machine emulating control e.g., droop control \cite{simpson2013synchronization}, virtual synchronous machines \cite{bevrani2014virtual}, synchronverters \cite{Z11}, are extensively studied and labeled as {\em grid-friendly} for meeting power demands and showing robustness against common disturbances. In particular, the matching control, introduced recently in \cite{jouini2016grid} has gained much attention, due to its simple implementation and advantageous plug and play properties \cite{arghir2018grid}.

One of the major difficulties in the network analysis of power system stability, is the presence of a continuum of steady states due to the symmetry of the vector field describing the multi-converter or multi-machine dynamics \cite{G18}. In particular, the rotational invariance indicates the absence of a reference frame or absolute angle in power system and presents a fundamental obstacle for defining suitable error coordinates. A common approach in power system literature is to eliminate this continuum of equilibria, e.g., by performing transformations either resulting from grounding a node \cite{tegling2015price}, or projecting into the orthogonal complement  \cite{schiffer2019global}, if the equilibrium manifold is a linear subspace, where classical stability tools such as Lyapunov direct method, can be deployed. Nonetheless, this type of transformations are not possible for high-order systems, where the dynamics do not have a direct coupling term or if the Laplacian matrix cannot be expressed explicitly. This has been highlighted at different occasions in power system literature.

Differential geometric methods have been adopted in the study of nonlinear solutions of symmetric vector fields and we distinguish two main avenues. First, {\em contraction theory and differentiation methods} \cite{lohmiller1998contraction} assess the stability of nonlinear trajectories, in terms of their convergence with respect to one another. Contraction theory captures the convergence towards a particular solution with a specific smooth property \cite{russo2011symmetries} relying on infinitesimal virtual displacements. The study of differential system dynamics on the tangent bundle shows for example the convergence to an {\em attractor} for coupled identical nonlinear oscillators \cite{wang2005partial}. Contracting systems are also referred to as {\em convergent}.

Second, {\em Lyapunov theory and incremental methods} \cite{angeli2002lyapunov} have been recognized as promising tools to study the stability of trajectories with respect to one another, besides being attracted towards an equilibrium of interest. Incremental Lyapunov theory is tailored to power system models in the aftermath of a failure or disturbance from an energy-shaping perspective. 
 
Another approach, under the name of {\em differentiable Lyapunov framework}{\tb,} merges integration methods revolving around incremental Lyapunov functions with differentiation methods, based on contraction analysis \cite{forni2013differential}. This approach allows the study of stability of nonlinear trajectories by looking at the dynamics of their virtual displacements and measures well-defined distance, called Finsler distance, between them via integration.

\paragraph{Contributions}
In this work, we consider high-fidelity power system model, consisting of identical DC/AC converters interconnected via identical resistive and inductive lines. Based on preliminary results in \cite{jouini2019parametric}, we exploit structural properties of the vector field to prove convergence of the nonlinear trajectories, under fully decentralized conditions, which can be verified individually at each converter. 
For this, we adopt the differential stability framework presented in \cite{forni2013differential}, by lifting the Lyapunov function to the tangent bundle. Based on considerations in the quotient manifold, we show that solutions of the multi-converter system converge towards a synchronous equilibrium manifold on a contraction region characterized by small distance of the angles, frequency and AC signals to the subspace, representing the tangent vector of the rotational invariance at steady state. We link our stability results theories in the study of weak/partially contracting systems. Our simulations illustrate our results, where the stability conditions are satisfied and the contraction region is numerically estimated.

\paragraph{Notation}:
We define an undirected graph $G=(\mathcal{V}, \mathcal{E})$, where $\mathcal{V}$ is the set of nodes with $\vert\mathcal{V}\vert =n$ and $\mathcal{E}\subseteq \mathcal{V}\times \mathcal{V}$ is the set of interconnected edges with $\vert \mathcal{E}\vert=m$. We assume that the topology specified by $\mathcal{E}$ is arbitrary and define the map $\mathcal{E} \to \mathcal{V}$, which associates each oriented edge $e_{ij}=(i,j) \in \mathcal{E}$ to an element from the subset $ \mathcal{I}=\{-1,0,1\}^{|\mathcal{V}|}$,  
resulting in the incidence matrix $\mathcal{B}\in\mathbb{R}^{n\times m}$.
We denote by $\mathds{1}_n$ the vector of all ones,  $I\in\real^{2\times 2}$ the identity matrix $I=\left[\begin{smallmatrix}
1& 0\\ 0& 1
\end{smallmatrix}\right]$, $\mathbf{I}$ the identity matrix of dimensions $p$, with $p\in\mathbb{N}$ and $\mathbf{J}= \mathbf{I} \otimes J$ with $J=\left[\begin{smallmatrix}
0& -1\\ 1& 0
\end{smallmatrix}\right]$.
We define the rotation matrix $R(\gamma)=\left[\begin{smallmatrix}
\cos(\gamma) & -\sin(\gamma) \\ \sin(\gamma) &\cos(\gamma) 
\end{smallmatrix}\right]$, 
and $\textbf{R}(\gamma)= \mathbf{I}\otimes R(\gamma)$. Let $\textrm{diag}(v)$ denote a diagonal matrix, whose diagonals are elements of the vector $v$ and $\mathrm{Rot}(\gamma)=\text{diag}(r(\gamma_{1}), \dots r(\gamma_{n})),\; k=1\dots n$, with $r(\gamma_k)=\begin{bmatrix} -\sin(\gamma_k) & \cos(\gamma_k)\end{bmatrix}^\top$. Let $\mathbb{S}^1$ be the unit circle, and $\mathbb{T}^n=\mathbb{S}^1\times \dots \mathbb{S}^1$ the $n$-th dimensional torus. Let
$d(\cdot, \cdot)$ be the distance metric. 
Given a set $\mathcal{A}\subseteq\mathds{R}^n$, we denote by $T_z\,\mathcal{A}$ the tangent space of $\mathcal{A}$ at $z$ and the tangent bundle of $\mathcal{A}$ by $T\mathcal{A}=\bigcup\limits_{z\in\mathcal{A}} \{z\}\times T_z\mathcal{A}$. Let $\mathcal{K}$ and $\mathcal{K}_{\infty}$ be comparison functions  defined by all the maps $k:\real_{\geq 0}\to \real_{\geq 0}$, that are continuous and strictly increasing, where $k(0)=0$. For $\mathcal{K}_{\infty}$ functions, it holds that $k(t)\to \infty$ as ${t\to\infty}$.

\section{System setup}
\label{sec: setup}
\subsection{Multi-source power system dynamics}
\label{subsec: model}
We start from the following general model describing the evolution of identical DC/AC converters in closed-loop with the matching control \cite{arghir2018grid}, a control strategy that renders the closed-loop DC/AC converter structurally similar to a synchronous machine, interconnected with identical resistive and inductive lines.

We model the dynamics of a balanced and averaged three-phase DC/AC converter in closed-loop with matching control, after transformation into a rotating $dq$ frame, at the nominal steady state frequency $\omega_n>0$, with angle $\theta_{dq}(t)=\int_0^t \omega_n \,d\tau$ (by the so-called Clark transformation~\cite{K94}), given by first-order differential equations,
\begin{align}
\begin{bmatrix} \dot {\gamma}_k  \\C_{dc} \dot v_{dc,k} \\ L\dot {i}_{k} \\ C\dot {v}_{k}   \end{bmatrix}=  \left[\begin{smallmatrix}
\eta (v_{dc,k}-v_{dc}^*) \\ -K_p (v_{dc,k}-v_{dc}^*)-\frac{\mu}{2} r(\gamma_{k})^{\top}i_{k}\\
-(R\, I+L\, \omega_n\, J )\, i_{k}+ \frac{\mu}{2} r(\gamma_{k}) v_{dc{,k}}-v_{k}\\
 -(G\, I+C\, \omega_n\,J)\, v_{k} +i_{k}-i_{net,k}
\end{smallmatrix}\right]+ \begin{bmatrix}
0 \\ i^*_{dc,k}\\ 0 \\ 0  
\end{bmatrix}
\label{eq: i-node}
\end{align}
where $\gamma_k\in \mathbb{S}^1$ is the virtual converter angle, $\eta$ is a positive control gain, $\dot \gamma_k=\omega_k\in \mathbb{R}$ is the relative (to the nominal) frequency. Let $v_{dc,k}\in\real$ denote the DC voltage across the DC capacitor with nominal value $v_{dc}^*$. The parameter $C_{dc}>0$ represents the DC capacitance and the  conductance $G_{dc}>~0$ together with the proportional control gain $\hat K_p>0$, are represented by $K_p=G_{dc}+\hat K_p$. This results from designing a controllable current source $ i_{dc,k}=\hat K_p(v_{dc,k}-v_{dc}^*)+i^*_{dc,k}$, where we denote by $i^*_{dc,k}\in\real$ a constant current source representing DC side input to the converter. Let $\mu\in[0,1]$ be the constant modulation amplitude,
$i_{k} \in  \mathbb{R}^2$ the inductance current, $v_{k} \in  \mathbb{R}^2$ the output voltage and $i_{\ell,k}\in  \mathbb{R}^2$ the line current. 
The filter resistance and inductance are represented by $R>0$ and $L>0$. The capacitor $C>0$ is set in parallel with the load conductance $G>0$ to ground and connected to the network via the line current $i_{net,k}\in\real^{2}$.

 
By lumping the states of $n$ identical converters and $m$  identical  lines and defining the impedance matrices $Z_R=R\; \mathbf{I}+L\,\omega_n\, \mathbf{J}, \, Z_C=G\;  \mathbf{I}+C\,\omega_n\,\mathbf{J},\, Z_\ell=R_\ell \; \mathbf{I}+L_\ell\omega_n\, \mathbf{J}$, we obtain the following power system model, 
\begin{align}
\begin{bmatrix} \dot {\gamma}  \\\dot v_{dc} \\ \dot {i}_{} \\ \dot {v}_{}  \\ \dot {i}_{\ell} \end{bmatrix}= K^{-1} \left[\begin{smallmatrix}
\eta (v_{dc}-v_{dc}^*\mathds{1}_n)\\ -K_p (v_{dc}-v^*_{dc}\mathds{1}_n)-\frac{\mu}{2} \mathrm{Rot}(\gamma)^{\top}\, i_{}\\
-Z_R\,  i_{}+ \frac{\mu}{2} \mathrm{Rot}(\gamma) \, v_{dc}-v_{}\\
-Z_C\, {{v}}_{}-\mathbf{B}\, i_\ell+i \\
-Z_\ell\, i_{\ell}+\mathbf{B}^\top\, v_{}
\end{smallmatrix}\right]+K^{-1} \begin{bmatrix}
0 \\ \textbf{u}\\ 0 \\ 0 \\ 0 
\end{bmatrix}\,,
\label{eq: multi-node}
\end{align}
where we define the angle vector $\mathbf{\gamma} =\begin{bmatrix}
\gamma_1 \dots \gamma_n
\end{bmatrix}^{\top}\in \mathbb{T}^n$, with relative frequencies ${\omega} =\begin{bmatrix}
\omega_{1} \dots \omega_{n}
\end{bmatrix}^{\top}\in \mathbb{R}^n$,  DC voltage vector ${v_{dc}} =\begin{bmatrix}
v_{dc,1} \dots v_{dc,n}
\end{bmatrix}^{\top}\in \mathbb{R}^n$,  AC inductance current ${i} =\begin{bmatrix}
i_{1} \dots i_{n}
\end{bmatrix}^{\top}\in \mathbb{R}^{2n}$ and output capacitor voltage ${v} =\begin{bmatrix}
v_1 \dots v_n
\end{bmatrix}^{\top}\in \mathbb{R}^{2n}$. 
The last equation in \eqref{eq: multi-node} describes the line dynamics and in particular, the evolution of the line current ${i_\ell}:=\begin{bmatrix}
i_{\ell_1} \dots i_{\ell_m}
\end{bmatrix}^\top \in \mathbb{R}^{2m}$, where $R_{\ell}>0$ is the line resistance, $L_{\ell}>0$ is the line inductance,
$\mathbf{B}=\mathcal{B}\otimes {I}$ and $K=\textrm{diag}\left(\mathbf{I}, C_{dc}\; \mathbf{I}, L\; \mathbf{I} ,C\; \mathbf{I},L_\ell\;\mathbf{I} \right)$.
The multi-converter input is represented by $\mathbf{u}=\begin{bmatrix}
i^*_{dc,1}, \dots ,i^*_{dc,n}
\end{bmatrix}^\top\in\mathbb{R}^n$.

Let $N$ be the dimension of the state vector $z=\begin{bmatrix}
\gamma^\top & \tilde v_{dc} ^\top& x^\top
\end{bmatrix}^{\top}$. We define the relative DC voltage $ \tilde {v}_{dc}= v_{dc}-v_{dc}^*\mathds{1}_n$, the vector of AC signals $x=\begin{bmatrix}
i^\top_{} & v^\top_{} & i^\top_\ell
\end{bmatrix}^{\top}$ and the input $u=\begin{bmatrix} 0 & \textbf{u} & 0 & \dots & 0\end{bmatrix}\in~\real^N$ given by the vector in \eqref{eq: multi-node}. 

By putting it all together, we arrive at the nonlinear power system model compactly described by,
\begin{align}
\dot z={f}(z,  u),\;  
\label{eq: non-lin}
\end{align}
for all $z\in\real^N$, where ${f}(z,u)$ denotes the vector field in \eqref{eq: multi-node}.

Consider the nonlinear power system model  in \eqref{eq: non-lin}, for all $\theta\in\mathbb{S}^1$, it holds that,
\begin{align}
 f(\theta \, h_0+H(\theta)\, z,u)=f([z],u)= H(\theta)\, f(z,u)\,,
\label{eq:invariance}
\end{align}
where $ h_0=\begin{bmatrix}
\mathds{1}^\top_n & 0^\top& 0^\top 
\end{bmatrix}^\top, \ H(\theta)=\left[\begin{smallmatrix}
\mathbf{I} & 0 & 0 \\ 0 & \mathbf{I} & 0\\ 0 & 0 & \mathbf{R}(\theta)
\end{smallmatrix}\right],\;$ and 
\begin{align}
\label{eq: eq-class}
[z]=\left\{\begin{bmatrix}
(\gamma +\theta \mathds{1}_n)^\top &  \tilde v_{dc}^\top & (\mathbf{R}(\theta)\,x)^\top \end{bmatrix}^\top, \, \theta\in\mathbb{S}^1 \right\}.
\end{align} 
In fact, the rotation matrix $\mathbf{R}(\theta)$, commutes with the impedance matrices $Z_R,\,Z_C,\,Z_{\ell}$, the skew-symmetric matrix $\mathbf{J}$ and the incidence matrix $\mathbf{B}$. Notice that for $\theta=2\, k\,\pi,\, k\in\mathbb{Z}$, it holds that  $[z]=z$ and hence $z\in[z]$.

The symmetry \eqref{eq:invariance} arises from the fact that nonlinear power system model \eqref{eq: non-lin} has no absolute angle. In fact, a shift in all (virtual) angles $\gamma\in\mathds{T}^n$, induces a rotation in the angles of AC signals. Up to re-defining the $dq$ transformation angle to $\theta'_{dq}(t)=\theta_{dq}(t)+\theta$, the vector field \eqref{eq: non-lin} remains invariant under the translation and rotation actions in \eqref{eq:invariance}.

\subsection{Steady state manifold}
\label{sec: ss-manifold}
In light of Section \ref{subsec: model}, we aim to understand the properties of the induced synchronous equilibrium manifold,
\begin{align}
\mathcal{M}=\left.\{z^*\in \real^N \, \vert \: {f}(z^*,u)=0 \right.\},
\label{eq:ss-manifold}
\end{align}
resulting from setting \eqref{eq: non-lin} to zero, for a given input vector ${u}\in\real^N$ to be specified. Next, we investigate the properties of the equilibrium manifold $\mathcal{M}$ and define properties related to its symmetry and feasibility.
\begin{lemma}
\label{lem:ss-prop}
Consider the equilibrium manifold $\mathcal{M}$ described by \eqref{eq:ss-manifold}. Then, $\mathcal{M}$ has the following properties:
\begin{enumerate}
	\item {\underline {Synchronization}:} The frequencies of all converters synchronize at the nominal frequency $\omega_n$.
	\item {\underline {Rotational symmetry}}: $\mathcal{M}$ has a rotational symmetry given by the equilibrium manifold, 
	\begin{align}
   \!\!\!\!\!\!\!\!\!\!\![z^*]\!\!=\!\!\left\{\begin{bmatrix}
	(\gamma^*+\theta\mathds{1}_n)^\top & \! 0^\top \!&(\mathbf{R}(\theta)x^*)^\top
	\end{bmatrix}^\top,\,\theta\in\mathbb{S}^1\right\}, 
	\label{eq:rot-symm}
	\end{align}
	 that is, for all $z^* \in \mathcal{M}$, it holds that $[z^*] \in \mathcal{M}$.
	\item {\underline {Feasibility}}: If $\mathbf{u}(z^{*})=\frac{\mu}{2}\mathrm{Rot}(\gamma^*)^\top i^*_{}$, then $\mathcal{M}$ is non-empty.
\end{enumerate}

\end{lemma}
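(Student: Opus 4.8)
The plan is to verify the three claims by direct analysis of the algebraic equations obtained from setting the right-hand side of \eqref{eq: multi-node} to zero. Since $K^{-1}$ is invertible, $f(z^*,u)=0$ is equivalent to the vanishing of the bracketed vector field together with the input term.

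\emph{Step 1 (Synchronization).} First I would read off the first block of equations: $\dot\gamma = \eta(v_{dc}-v_{dc}^*\mathds{1}_n) = 0$ forces $v_{dc}^* \mathds 1_n = v_{dc}$, hence $\tilde v_{dc}=0$ and every converter's DC voltage equals $v_{dc}^*$. Then $\dot\gamma_k = \omega_k = 0$ for the relative frequencies, which by definition of $\omega$ as the deviation from the nominal $\omega_n$ means every converter runs at exactly $\omega_n$. This is the synchronization claim, and it is essentially immediate once $\tilde v_{dc}=0$ is established.

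\emph{Step 2 (Rotational symmetry).} Here I would invoke the invariance identity \eqref{eq:invariance} directly: for any $z^*\in\mathcal{M}$ and any $\theta\in\mathbb{S}^1$, $f(\theta h_0 + H(\theta)z^*, u) = H(\theta) f(z^*,u) = H(\theta)\cdot 0 = 0$, so the rotated point $\theta h_0 + H(\theta)z^*$ also lies in $\mathcal{M}$. Unwinding the definitions of $h_0$ and $H(\theta)$, this rotated point is exactly the element of $[z^*]$ with parameter $\theta$ as written in \eqref{eq:rot-symm} (using that at equilibrium $\tilde v_{dc}=0$, which is why the middle block is $0^\top$ rather than $\tilde v_{dc}^\top$). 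One small point to be careful about: the identity \eqref{eq:invariance} is stated for the full vector field $f(z,u)$, and I must make sure the input term $u$ (which only enters the $\tilde v_{dc}$ block and is rotation-independent) is genuinely fixed under $H(\theta)$ — this follows because $H(\theta)$ acts as the identity on the first two blocks. So $[z^*]\subseteq\mathcal{M}$.

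\emph{Step 3 (Feasibility).} This is where the real work is, and I expect it to be the main obstacle. I would start from a desired equilibrium angle configuration $\gamma^*$ and a desired AC-signal vector $x^* = (i^{*\top}, v^{*\top}, i_\ell^{*\top})^\top$, and show the remaining equations can be solved consistently when the input is chosen as $\mathbf u = \frac{\mu}{2}\mathrm{Rot}(\gamma^*)^\top i^*$. Concretely: the fourth block gives $Z_\ell i_\ell^* = \mathbf B^\top v^*$, which determines $i_\ell^*$ from $v^*$ since $Z_\ell$ is invertible (its symmetric part $R_\ell\mathbf I$ is positive definite); the third block gives $Z_C v^* = i^* - \mathbf B i_\ell^*$, a consistency relation; the second block (inductance current) gives $Z_R i^* = \frac{\mu}{2}\mathrm{Rot}(\gamma^*) v_{dc}^* - v^*$, and the DC-voltage block with $\tilde v_{dc}=0$ reduces to $0 = -\frac{\mu}{2}\mathrm{Rot}(\gamma^*)^\top i^* + \mathbf u$, which is precisely the stated choice of $\mathbf u$. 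The argument is then that one can freely pick $v^*$ (or equivalently $i^*$), back out all other quantities through the invertible impedance matrices, and the single nontrivial scalar-per-node balance is absorbed exactly by the feasibility choice of $\mathbf u$; hence $\mathcal M$ contains at least this constructed point and is non-empty. The delicate part is bookkeeping the $\mathrm{Rot}(\gamma^*)$ versus $r(\gamma_k^*)$ notation and confirming that the per-node power-balance condition closes without over-determining the system — i.e., that the number of free parameters matches — which I would check by counting equations against unknowns block by block.
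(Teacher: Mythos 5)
Your proposal is correct and follows essentially the same route as the paper: read the steady-state equations block by block to get $v_{dc}=v_{dc}^*\mathds{1}_n$ and $\omega=0$, invoke the invariance identity \eqref{eq:invariance} at an equilibrium to get $[z^*]\subseteq\mathcal{M}$, and choose $\mathbf{u}$ to balance the DC-voltage equation for feasibility. Your Step 3 is in fact more careful than the paper's one-line argument (one quibble: $v^*$ is not freely chosen but is determined from $\gamma^*$ by the invertible linear network equations, which is exactly the consistency check you flag), so the only difference is that you fill in bookkeeping the paper omits.
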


\begin{proof}
	$\empty$
	\begin{enumerate}
	\item By the chosen $dq$ frame with $\dot\theta_{dq}=\omega_n$, we set the angle dynamics in \eqref{eq: multi-node} to zero. This implies that for $v_{dc}=v_{dc}^*$, the relative frequency $\dot\gamma=\omega=0$ at steady state.
	\item The existence of a symmetry for $\mathcal{M}$ is a direct consequence of \eqref{eq:invariance}, applied to the steady state equations, satisfying~\eqref{eq:ss-manifold}.
	\item The feasibility condition follows from setting DC voltage dynamics in \eqref{eq: multi-node} to zero and solving for the input $i^*_{dc,k}$,  given by $i^*_{dc,k}-\frac{\mu}{2}{r}^\top(\gamma_k^*)\,i^*_{k}=0$, for $k=1\dots n$.
	
	\end{enumerate}
\end{proof}
\begin{assumption}[Feasibility of the steady states]
	Assume that the input $u$ in \eqref{eq: multi-node} is given by $\mathbf{u}=\mathbf{u}(z^*)$.
	\label{ass: feasibility}
\end{assumption}


\section{Local asymptotic contraction of power system model}
\label{sec: local-contraction}

\subsection{Preliminaries}
Under Assumption \ref{ass: feasibility}, we consider the power system model~\eqref{eq: non-lin}. Let $\mathcal{M}\subset\mathbb{R}^N$ be a non-empty steady-state manifold as defined in \eqref{eq:ss-manifold}. 
Because of the symmetry \eqref{eq:invariance}, we define the quotient manifold $\real^N/ \sim$ induced by the following equivalence relation for $ z_1=\begin{bmatrix}
\gamma_1^\top &\tilde v_{dc,1}^\top & x_1^\top
\end{bmatrix}^\top,\; z_2 =\begin{bmatrix}
\gamma_2^\top & \tilde v_{dc,2}^\top & x_2^\top
\end{bmatrix}^\top$, given by,
\begin{align}
\label{eq: equi-class}
z_1  \sim  z_2 & \, \text{ iff } \exists\, \theta \in \real,   \gamma_1-\gamma_2=\theta \mathds{1}_n,\;   x_1=R(\theta)\, x_2,
\end{align}
and defined by the equivalence class  \eqref{eq: eq-class}.
 The equivalence between two AC signals $x_1$ and $x_2$ follows from re-defining $dq$ frame angle.
Hence \eqref{eq: non-lin} represents a {\em quotient system} on $\real^N/ \sim$, in the sense of \cite[Sec. VIII-B]{forni2013differential} and \cite[Sec. B]{russo2011symmetries}: For every initial condition $z'_0\in[z_0]$, the solution $\phi (\cdot, z'_0)$ to  \eqref{eq: non-lin} satisfies $\phi (\cdot, z'_0)\in[\phi (\cdot, z_0)]$.

\begin{assumption}[Isolated equilibria on $\real^N/ \sim$]
\label{ass:isol-eq}
Consider the system  \eqref{eq: non-lin} defined on the quotient manifold $\real^N/ \sim$. Assume that the equilibria of the manifold $\mathcal{M}$  on $\real^N/ \sim$ are isolated.
\end{assumption}

Based on Assumption \ref{ass:isol-eq}, let $\mathcal{D}\subset \real^N$ be a neighborhood of $[z^*]\in\mathcal{M}$. In general, the steady state manifold $\mathcal{M}$ has multiple equilibria that are isolated (by Assumption \ref{ass:isol-eq}) on $\real^N/\sim$. We refer to the study of contraction of solutions of \eqref{eq: non-lin} restricted to a region of the space $\mathcal{D}$ containing $[z^*]\subset\mathcal{M}$ by {\em local} contraction analysis.

Next, we consider the following variational system on $\mathcal{D}$, (and implicitly on $\mathcal{D}/ \sim$),
\begin{align}
\label{eq:var-sys}
\dot { z}&=f(z,u), \\
\delta {\dot {z}}&=\frac{\partial f( z)}{\partial  z}\, {\delta z} \nonumber,
\end{align}
where ${\partial f(z)}/{\partial z}$ denotes the partial derivatives of \eqref{eq: non-lin} representing the Jacobian and $\delta z$ lies on $T_{z} \mathcal{D}$ the tangent space of $\mathcal{D}$ at $z$.

\begin{definition}[Lyapunov function with respect to $\mathcal{S}$]
	\label{def: LF}
	A differentiable function $V:\mathcal{U}\to \mathbb{R}, \: \mathcal{U} \subseteq \real^N$, is a Lyapunov function  with respect to a non-empty, closed and invariant set $\mathcal{S}\subseteq \mathcal{U}$, if  
	\begin{enumerate} 
		\item $V$ is positive definite with respect to $\mathcal{S}$, that is,
		\begin{itemize}
			\item $V(z)=0, \quad z \in \mathcal{S}$,
			\item $V(z)>0, \quad z \in \mathcal{U}\setminus\mathcal{S}$.
		\end{itemize}
		\item  Lie derivative of $V$ is negative definite with respect to $\mathcal{S}$, that is, 
		\begin{itemize}
			\item $\dot V(z)=0, \quad z \in \mathcal{S}$,
			\item $\dot V(z)<0, \quad z \in \mathcal{U} \setminus\mathcal{S}$.
		\end{itemize}
	\end{enumerate}
\end{definition}

Definition \ref{def: LF} is equivalent to the notion of  smooth Lyapunov function with respect to $\mathcal{S}$ using $\mathcal{K}_{\infty}$ functions introduced in \cite{lin1996smooth}.

Our analysis  of the Jacobian of the nonlinear power system model \eqref{eq: non-lin} in \cite{jouini2019parametric, jouini2019local}, takes under the loop the behavior of the differential system in \eqref{eq:var-sys} restricted to the tangent space  $T_{z^*}\mathcal{M}$ with $z^*\in \mathcal{M}$, as shown in Figure \ref{fig: old-analysis} and described by,
\begin{align}
\label{eq: lin-sys}
\delta {\dot z}=A(z^*) \; \delta z, \quad A(z^*)=\left[
\begin{array}{c|c}
A_{11} & A_{12} \\
\hline
A_{21} & A_{22}
\end{array}
\right],
\end{align}
with $\delta z=\begin{bmatrix}\delta z^\top_1 & \delta z^\top_2\end{bmatrix}^\top \in T_{z^*}\mathcal{M}$ corresponding to the partition $\delta z_1=\begin{bmatrix} \delta \gamma^\top & \delta \tilde v_{dc}^\top \end{bmatrix}^\top,\, \delta z_2=\delta x$. Note that the Jacobian ${A}(z^*)=\frac{\partial {f}}{\partial z}\vert_{z=z^*}$ has a one-dimensional zero subspace denoted by,  
 $$\mathrm{span}\{v(z^*)\}=\mathrm{span}\{\left[ \begin{array}{c c c} 
\mathds{1}_n {^\top} & 0^\top & (\mathbf{J} \, x^*)^\top \end{array}\right]^\top\},$$ where $\mathbf{J} \, x^*=\begin{bmatrix}(\mathbf{J}\,i_{}^*){^\top} & (\mathbf{J}v_{}^*){^\top} & (\mathbf{J}\, i_{\ell}^*){^\top}\end{bmatrix}^\top$.  For all $\delta z\in~T_{z^*}\mathcal{M}$, we show in \cite{jouini2019parametric} asymptotic stability of $\mathrm{span}\{v(z^*)\}$, in the sense of \cite[Theorem 2.8]{lin1996smooth} under the following steady state condition:

\begin{condition}[Equilibira of interest \cite{jouini2019parametric}]
	\label{cond:hurwitz-cdt-dcac}
	Consider a steady state $z^*\in\mathcal{M}$. Assume the following condition is satisfied at the $k$-th converter, 
	\begin{align}
	\label{eq: are-cdt}
	Q^*_{sw,k}& > \frac{\mu^2 v_{dc}^{*2}}{16 \,R},\; k=1\dots n,
	\end{align}
	where $Q_{sw,k}^*=\frac{1}{2}\, \mu (\mathbf{J}\, r(\gamma_k^*))^\top i_{k}^* \,v_{dc}^*$, denotes steady state reactive power after the switching block (before the output filter) at the $k$-th converter.
\end{condition}

\begin{figure}[h!]
	\centering
	\includegraphics[scale=0.7]{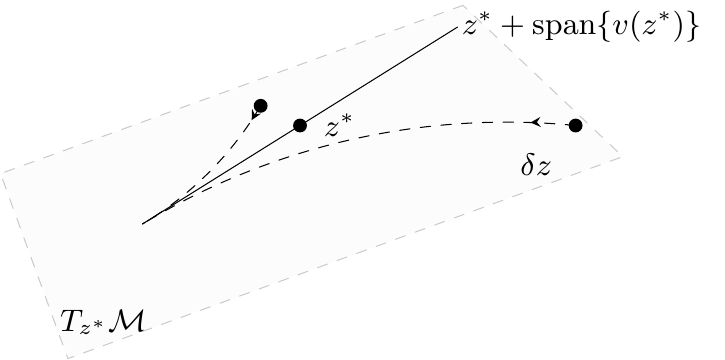}
	\caption{Proof of asymptotic stability of $\mathrm{span}\{v(z^*)\}$ in the sense of \cite{lin1996smooth} for trajectories on the tangent space  $T_{z^*}\mathcal{M}$ for the linearized power system model \eqref{eq: lin-sys} in \cite{jouini2019parametric}.}
	\label{fig: old-analysis}
\end{figure}

\begin{definition}[Finsler-Lyapunov function \cite{forni2013differential}]
	\label{def: LF}
	A differentiable function $V:T\mathcal{D}\times \real_{\geq 0}\to \mathbb{R}_{\geq 0}$, is a Finsler-Lyapunov function, if it satisfies 
	\begin{align}
	\label{eq: finsler-dist}
	   c_1\, \mathcal{F}(x,\delta x, t)^p \leq V(x,\delta x, t)\leq c_2\, \mathcal{F}(x,\delta x, t)^p,
	\end{align}
	for some $c_1, c_2>0$ and with $p\in\mathds{N}$, where $\mathcal{F}(x,\delta x, t)$ is a Finsler structure (see \cite{forni2013differential}), uniformly in $x$ and $t$.
\end{definition}

By the key property \eqref{eq: finsler-dist}, there exists a well-defined distance on $\mathcal{D}$ via integration defined below,
\begin{definition}[Finsler distance \cite{forni2013differential}]
\label{def: fins-dist}
Consider a candidate Finsler-Lyapunov function V on the manifold $\mathcal{X}$ and the associated Finsler structure $\mathcal{F}$. For any subset $ {X}\subset\mathcal{X}$, and any two points $z_1,z_2\in\mathcal{X}$, let $\Gamma(z_1,z_2)$ be the collection of piecewise $C^1$ curves, $\gamma: I \to \mathcal{X}$, connecting $z_1$ and $z_2$ with $\gamma(0)=z_1$ and $\gamma(1)=z_2$. The Finsler distance $d:\mathcal{X}\times \mathcal{X}\to\real_{\geq 0}$ induced by the structure $\mathcal{F}$ is defined by \begin{align}
d(z_1,z_2):= \inf_{\Gamma(z_1,z_2)}\int_\gamma \mathcal{F}\left(\gamma(s), \frac{\partial \gamma}{\partial s}, t\right)ds
\end{align}
\end{definition}
The pseudo-distance induced by $\mathcal{F} = \sqrt{V}$ on $\mathcal{D}$ is a distance on the quotient manifold $\mathcal{D}/\sim$.

To analyze the behavior of the linearized trajectories on the tangent bundle $T \mathcal{D}$ of the variational system \eqref{eq:var-sys}, we define a parameterized Lyapunov function $V:T\mathcal{D} \to\real$, with respect to $\mathcal{S}=\text{span}\{v(z^*)\}$ (from Definition \ref{def: LF} and as in \cite{jouini2019parametric}) and given by,
\begin{align}
\label{eq: LF}
\!\!\!\!\! V(\delta z)\!=\!\delta  z^\top \underbrace{\left(P-\frac{P\, v(z^*)v(z^*)^\top P}{v(z^*)^\top P \, v(z^*)} \right)}_{\Pi}\, \delta  z, \ P=\left[ \scalemath{0.7}{
\begin{array}{c|c} P_1 & 0\\ \hline 0 & P_2
\end{array}}\right] \!\! \!\! 
\end{align}
where $P$ is a symmetric, positive definite matrix with block diagonals $P_1>0$ and $P_2>0$ and $\delta z \in~T_z \mathcal{D}$. 
The Lyapunov function in \eqref{eq: LF} represents the squared distance of the tangent vector $\delta z\in\real^N$ to the linear subspace $\mathrm{span}\{v(z^*)\}$, in the weighted inner product defined by $\langle\cdot ,\cdot\rangle_P=(\cdot)^{\top}P\,(\cdot),\, P>0$ and the weighted Euclidean norm $\vert\vert \cdot \vert\vert_P=\sqrt{\langle \cdot,\cdot\rangle_P}$.
Note that by Definitions \ref{def: fins-dist} and \ref{def: LF}, $V(\delta z)$ represents a Finsler Lyapunov function.


We say that a solution to \eqref{eq:var-sys} is {\em  asymptotically contracting} on a forward invariant set $\mathcal{C}$, if for all initial conditions $z_1, z_2\in\mathcal{C}$, 
\begin{align}
\label{eq:increm-stab}
d\left(\phi(t,z_1), \phi(t,z_2)\right)&\leq k\, (d(z_1,z_2)) \\
\lim_{t\to\infty} d\left(\phi(t,z_1), \phi(t,z_2)\right)&=0, \nonumber
\end{align}
where $t>0$, $k(\cdot)$ is a $\mathcal{K}$-function and $d(\cdot,\cdot)$ is a pseudo-distance metric. Notice that if
$d(z_1,z_2)=0$, for all   $[z_1]=[z_2]$, then $d(\cdot, \cdot)$ becomes a distance metric on $\mathcal{C}/ \sim$ and the solutions to \eqref{eq:var-sys} satisfying \eqref{eq:increm-stab} on $\mathcal{C}/ \sim$  are  {\em incrementally asymptotically stable}, see \cite[Theorem 3]{forni2013differential}. 

It is noteworthy that the pseudo-distance induced by $\sqrt{V}$ on $\mathcal{D}$ in \eqref{eq: LF} is a distance on the quotient space $\mathcal{D}/\sim$.

\subsection{Local contraction analysis}

Since asymptotic contraction of \eqref{eq:var-sys}  on a forward invariant set $\mathcal{C}\subseteq \mathcal{D}$ is equivalent to incremental asymptotic stability on the quotient $\mathcal{C}/ \sim$, in the next section, we show incremental asymptotic stability of the quotient system \eqref{eq:var-sys} and characterize a forward invariant set $\mathcal{C}_\epsilon$. 

 \begin{theorem}
\label{thm: main}
Let the power system model \eqref{eq: non-lin}, under Assumption \ref{ass: feasibility}, \ref{ass:isol-eq} and Condition \ref{cond:hurwitz-cdt-dcac} be defined on a neighborhood $\mathcal{D}\subset \real^N$ of $[z^*]\subset\mathcal{M}$. Then, the solutions to~\eqref{eq: non-lin} asymptotically contract towards the synchronous equilibrium manifold $[z^*]$ on $\mathcal{C}_\epsilon\subseteq \mathcal{D}$, where,
\begin{align}
\label{eq: roc}
\mathcal{C}_\epsilon&=\left\{(z, \delta z)\in\mathcal{D}, V(\delta z)\leq \epsilon  \right\}, 
\end{align} 
with $\epsilon$ being positive and sufficiently small.
\end{theorem}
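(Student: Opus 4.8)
The plan is to lift the analysis from the linearization on $T_{z^*}\mathcal{M}$ (the setting of \cite{jouini2019parametric}, where Condition \ref{cond:hurwitz-cdt-dcac} already yields asymptotic stability of $\mathrm{span}\{v(z^*)\}$) to the full variational system \eqref{eq:var-sys} on a neighborhood $\mathcal{D}$, and to exploit the symmetry \eqref{eq:invariance} to pass to the quotient. Concretely, I would first observe that $V(\delta z)$ in \eqref{eq: LF} is, by construction, the squared $P$-distance of $\delta z$ to $\mathrm{span}\{v(z^*)\}$, so $V(\delta z)=0$ exactly on the tangent direction generated by the rotational symmetry; since $\Pi$ is a (weighted) orthogonal projector annihilating $v(z^*)$, $V$ is positive definite with respect to $\mathcal{S}=\mathrm{span}\{v(z^*)\}$ and qualifies as a Finsler--Lyapunov function with $p=2$ and suitable $c_1,c_2>0$ determined by the eigenvalues of $\Pi$ restricted to $\mathcal{S}^{\perp_P}$. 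This establishes item 1 of Definition \ref{def: LF} on all of $T\mathcal{D}$.

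The core step is the derivative computation along \eqref{eq:var-sys}. I would write $\dot V(\delta z)=\delta z^\top\!\big(\Pi\,\tfrac{\partial f}{\partial z}(z)+\tfrac{\partial f}{\partial z}(z)^\top\Pi\big)\delta z$ plus a correction term coming from the $z$-dependence of $v(z^*)$ — but note that in \eqref{eq: LF} the projector is frozen at the fixed equilibrium $z^*$, so $\Pi$ is constant and no such correction appears; this is the reason the analysis is genuinely \emph{local} around $[z^*]$. Then I would split $\tfrac{\partial f}{\partial z}(z)=A(z^*)+\big(\tfrac{\partial f}{\partial z}(z)-A(z^*)\big)$. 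For the first piece, the result of \cite{jouini2019parametric} (valid under Condition \ref{cond:hurwitz-cdt-dcac}) gives $\delta z^\top(\Pi A(z^*)+A(z^*)^\top\Pi)\delta z\le -\alpha\,V(\delta z)$ for some $\alpha>0$ on $\mathcal{S}^{\perp_P}$, i.e. modulo the kernel direction that $\Pi$ already removes. For the second piece, since $f$ is smooth and $z^*$ is fixed, on the neighborhood $\mathcal{D}$ one has $\big\|\tfrac{\partial f}{\partial z}(z)-A(z^*)\big\|\le \ell(\|z-z^*\|)$ for a class-$\mathcal{K}$ bound $\ell$, so this term contributes at most $\beta(\mathcal{D})\,V(\delta z)$ with $\beta(\mathcal{D})\to 0$ as $\mathcal{D}$ shrinks around $[z^*]$. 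Choosing $\mathcal{D}$ small enough that $\beta(\mathcal{D})<\alpha$ yields $\dot V(\delta z)\le -(\alpha-\beta)\,V(\delta z)<0$ for $\delta z\notin\mathcal{S}$, which is item 2 of Definition \ref{def: LF}.

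From the exponential decay estimate $\dot V\le -(\alpha-\beta)V$ I would then deduce forward invariance of the sublevel set $\mathcal{C}_\epsilon=\{(z,\delta z):V(\delta z)\le\epsilon\}$ for any $\epsilon>0$ small enough that $\mathcal{C}_\epsilon$ projects into $\mathcal{D}$ (this fixes the ``sufficiently small $\epsilon$'' in the statement), and integrate the Finsler structure $\mathcal{F}=\sqrt{V}$ along geodesics joining $z_1,z_2$ to obtain $d(\phi(t,z_1),\phi(t,z_2))\le e^{-\frac{\alpha-\beta}{2}t}d(z_1,z_2)$, i.e. the two inequalities in \eqref{eq:increm-stab} with $k(\cdot)$ linear; care is needed to keep the connecting curves inside $\mathcal{C}_\epsilon$, which forward invariance guarantees. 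Finally, invoking \cite[Theorem 3]{forni2013differential} together with the fact (noted in the excerpt) that $\sqrt{V}$ induces a genuine distance on $\mathcal{D}/\sim$, asymptotic contraction on $\mathcal{C}_\epsilon$ is equivalent to incremental asymptotic stability of the quotient system, so the solutions converge to the equivalence class $[z^*]$; Assumption \ref{ass:isol-eq} ensures $[z^*]$ is the only equilibrium class reachable inside $\mathcal{D}$. The main obstacle I anticipate is making the perturbation bound rigorous \emph{after} modding out the kernel direction: one must check that the decomposition $\delta z=\delta z_\parallel+\delta z_\perp$ along $\mathcal{S}\oplus\mathcal{S}^{\perp_P}$ is preserved well enough by the flow of \eqref{eq:var-sys} — or, more cleanly, work directly with $\Pi\,\delta z$ and verify that $\Pi$ commutes appropriately with $A(z^*)$ on the relevant subspace — so that the contraction rate $\alpha$ from \cite{jouini2019parametric} is not destroyed by the off-diagonal coupling between $\mathcal{S}$ and its complement induced by the full Jacobian at $z\neq z^*$.
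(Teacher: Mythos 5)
Your proposal follows the same skeleton as the paper's proof: the same Finsler--Lyapunov function $V(\delta z)=\delta z^\top\Pi\,\delta z$ frozen at $z^*$, the same splitting $\tfrac{\partial f}{\partial z}(z)=A(z^*)+G(z)$ with the linearized part handled by Condition \ref{cond:hurwitz-cdt-dcac} via \cite{jouini2019parametric}, the same localization of the remainder on a small neighborhood, and the same passage to the quotient $\mathcal{D}/\sim$ through the results of \cite{forni2013differential}. The one substantive divergence is how the perturbation term is absorbed. You bound it \emph{multiplicatively}, claiming $\delta z^\top\bigl(\Pi G(z)+G(z)^\top\Pi\bigr)\delta z\le\beta(\mathcal{D})\,V(\delta z)$ and concluding exponential decay $\dot V\le-(\alpha-\beta)V$; the paper instead bounds it \emph{additively} by a constant $\hat\epsilon$ on $\mathcal{C}_\epsilon$ and lets $\hat\epsilon\to 0$, concluding only $\dot V\le 0$ with equality exactly on $\mathrm{span}\{v(z^*)\}$ before invoking \cite[Theorem 1]{forni2013differential}.

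Your multiplicative bound, as stated, does not follow: $V$ is only a semi-norm on $T_z\mathcal{D}$, vanishing on $\mathrm{span}\{v(z^*)\}$, while $G(z)$ does not annihilate that direction (its blocks $\widehat W,\widehat M$ act nontrivially on the angle component, and $\mathds{1}_n$ sits in the angle slot of $v(z^*)$). Writing $\delta z=\delta z_\parallel+\delta z_\perp$, the cross term $2(\Pi\delta z_\perp)^\top G(z)\,\delta z_\parallel$ is controlled by $\sqrt{V(\delta z)}\,\|G(z)\|\,\|\delta z_\parallel\|$, and $\|\delta z_\parallel\|$ is not dominated by $V(\delta z)$. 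You correctly flag exactly this as the anticipated obstacle, so you have identified the weak point; but it means the clean estimate $\dot V\le-(\alpha-\beta)V$ and the resulting explicit exponential rate $e^{-\frac{\alpha-\beta}{2}t}$ are not yet justified. To repair it you would need either an a priori bound on the kernel component of $\delta z$ along the variational flow, or to work with the projected displacement $\Pi\delta z$ and show the coupling from $\mathcal{S}$ into $\mathcal{S}^{\perp_P}$ is itself $O(\|z-z^*\|)\cdot\sqrt{V}$ in a way that closes a differential inequality in $\sqrt{V}$. Be aware that the paper's own additive bound quietly presupposes a bound on $\|\delta z\|$ over $\mathcal{C}_\epsilon$ as well, so your version is not weaker than the published argument --- it just makes the missing estimate more visible by demanding a rate.
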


\begin{proof}
We consider the variational system \eqref{eq:var-sys} under Assumptions \ref{ass: feasibility}, \ref{ass:isol-eq} and Condition \ref{cond:hurwitz-cdt-dcac} and follow ideas inspired from \cite{forni2013differential}. We take the derivative of the Lyapunov function \eqref{eq: LF} and add and substract $A(z^*)$ as defined in \eqref{eq: lin-sys}. Then, we obtain for all $(z, \delta z)\in\mathcal{D}$,
\begin{align*}
\dot V(\delta z)&=\delta z^\top\, \Pi \left( \frac{\partial f( z)}{\partial  z} \right) \delta z + \delta z^\top\,  \left(\frac{\partial f( z)}{\partial  z} \right)^\top\,\Pi\; \delta z, \\
& =\! \delta z^\top \, \left(P  A(z^*)+A(z^*)^\top P\right) \, \delta z+\delta z^\top \left(\Pi \, G(z) +G(z)^\top \, \Pi \right) \, \delta z, \\
& =\! -\delta z^\top \,Q(P) \delta z+\delta z^\top \left(\Pi \, G(z) +G(z)^\top \,\Pi \right) \, \delta z, 
\end{align*}
where $\delta z\in T_z\mathcal{D}$, and the matrix $G(z) = \frac{\partial f(z)}{\partial  z}-A(z^*)$ is given by,
\begin{align*}
G(z) =\left[\scalemath{0.8}{\begin{array}{c|c} G_{11} & G_{12}\\ \hline G_{21} & 0
\end{array}}\right]= \left[
\scalemath{0.7}{
\begin{array}{c c| c c c} 
0 & 0  & 0 & 0 & 0 \\   -C_{dc}^{-1}\widehat W(z) & 0 & -C_{dc}^{-1}\widehat Y(z)^\top & 0 & 0 \\ \hline L^{-1}\widehat M(z) & L^{-1}\widehat Y(z) & 0 & 0 & 0 \\ 0 & 0 & 0 & 0 &0\\ 0 & 0 & 0 & 0 &0
\end{array}}\right], \end{align*} and we define the matrices,
\begin{align*}
\widehat W( z)&=\frac{1}{2}\; \mu\; \text{diag}\left( (\mathbf{J} \mathrm{Rot}(\gamma))^\top i_{}-(\mathbf{J} \mathrm{Rot}( \gamma^*))^\top  i^*_{}\right), \\
\widehat Y( z)&=\frac{1}{2} \mu \,  \bigl(\mathrm{Rot}(\gamma)-\mathrm{Rot}(\gamma^*)\bigr), \ \mu\in[0,1] \\
\widehat M(z)&=\frac{1}{2} \mu\,   \, \bigl(\text{diag}(v_{dc}) \, \mathbf{J}\mathrm{Rot}(\gamma)-v^*_{dc}\,  \mathbf{J}\mathrm{Rot}(\gamma^*)\bigr).
\end{align*}

{By Conditions \ref{cond:hurwitz-cdt-dcac}, we have that $\text{span}\{v(z^*)\}\subset \mathcal{C}_\epsilon$ is asymptotically stable, which establishes that the set $\mathcal{C}_\epsilon$ is forward invariant.}

On the set $\mathcal{C}_\epsilon\subseteq \mathcal{D}$ given by \eqref{eq: roc}, we have that, for all $\gamma \in \Gamma(z,[z^*])$,
\[
d(z,[z^*])\leq \inf_{\Gamma(z,[z^*])}\int_\gamma \sqrt{\epsilon}\, ds< \epsilon',\epsilon'>0.
\]
which follows from the definition of Finsler distance in \eqref{eq: finsler-dist}.
This implies in particular that, $\vert\vert z-z^*\vert\vert<\epsilon'$, since $z^*\in[z^*]$, hence there exists sufficiently small $\hat \epsilon>0$, so that, 
$\delta z^\top \left (\Pi  \, G(z) +G(z)^\top \,\Pi  \right) \, \delta z\leq \hat\epsilon$. Thus, we have that 
$\dot V(\delta z) \leq  -\delta z^\top Q(P) \delta z+\hat\epsilon$, with $Q(P)$ given in \eqref{eq: LF}.

By choice of $Q_1=q_1\mathbf{I}$, $Q_2=q_2\frac{(\mathbf{J}x^*) (\mathbf{J}x^*)^\top}{(\mathbf{J}x^*)^\top (\mathbf{J}x^*)},\, q_1,q_2>0$ and from $\hat\epsilon\to 0$, we have that $\dot V(\delta z)=0 \Leftrightarrow \delta z=\text{span}\{v(z^*)\}$.

By \cite[Theorem 1]{forni2013differential}, the system \eqref{eq:var-sys} and hence \eqref{eq: non-lin} defined on the quotient space $\mathcal{D}/\sim$ is incrementally asymptotically stable on $\mathcal{C}_\epsilon/\sim$. As a consequence, the solutions to \eqref{eq: non-lin} are asymptotically contracting towards $[z^*]$ for all trajectories inititalized on $\mathcal{C}_\epsilon$ given by \eqref{eq:increm-stab}.
\end{proof}

\begin{figure}[h!]
	\vspace{-1.35cm}
	\includegraphics[width=0.58\linewidth, center]{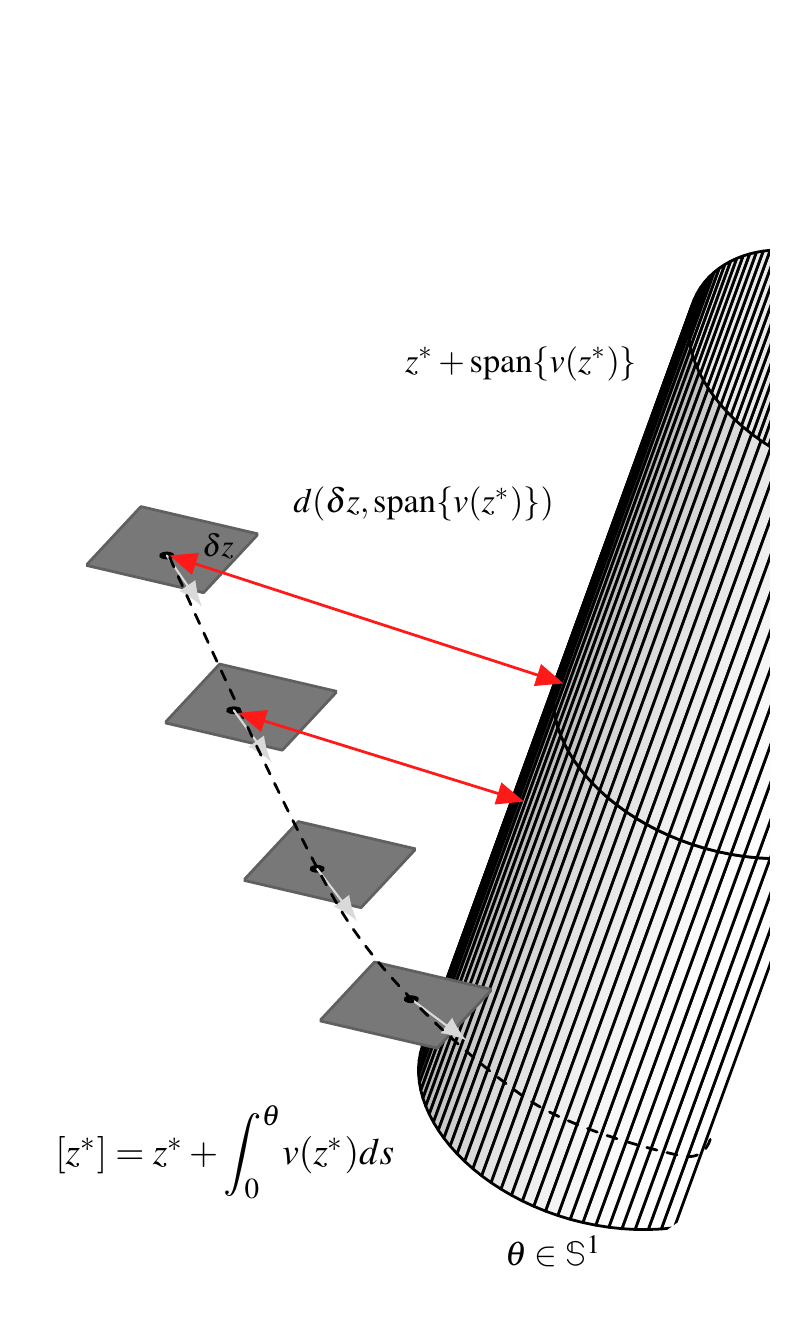}
	\caption{Convergence of a solution to \eqref{eq: non-lin} initialized on $\mathcal{C}_\epsilon\subseteq \mathcal{D}$ into the synchronous equilibrium manifold $[z^*]$ under Assumptions \ref{ass: feasibility},\ref{ass:isol-eq} and Condition \ref{cond:hurwitz-cdt-dcac}. The distance of the linearized trajectories $\delta z$ on the tangent space to the subspace $\mathrm{span}\{v(z^*)\}$ shrinks and corresponds to the contraction of the solution towards the equilibrium manifold. The lines on the surface represent the vector $z^*+\mathrm{span} \{v(z^*)\}$. Integrating over $\theta\in\mathbb{S}^1$ yields the equilibrium manifold $[z^*]$.}%
	\label{fig: proof}
\end{figure}

\subsection{Integral curve of $\mathrm{span}\{v(z^*)\}$}
We establish a formal link between the linear subspace $\mathrm{span}\{v(z^*)\}$ and the synchronous equilibrium manifold $[z^*]$, following our stability approach depicted in Figure~\ref{fig: proof}. In fact, the convergence of linearized trajectories on the tangent bundle to $\mathrm{span}\{v(z^*)\}$ corresponds to the convergence of nonlinear solutions to $[z^*]$ via integration. It hold that for $\theta\in\mathbb{S}^1$, 
\begin{align*}
[z^*]=z^*+\int_{0}^\theta v(z^*) \; \mathrm{d}s=z^*+{\mathlarger{\mathlarger{\int}}_{0}^\theta \begin{bmatrix}
\mathds{1}_n \\0 \\ \mathbf{J}\, \mathbf{R}(s)\, x^* 
\end{bmatrix} \; \mathrm{d}s},
\end{align*}
which follows from \eqref{eq: equi-class}. In fact, $v(z^*)$ is the tangent vector of $[z^*]$ in the $\theta-$ direction and lies on the tangent space $T_{z^{*}}\mathcal{M}$. This can also be deduced from \eqref{eq:invariance} by expanding the Taylor series around $(\theta',z^*),\, \theta'\in\mathbb{S}^1,\ z^*\in\mathcal{M}$ of left and right terms in \eqref{eq:invariance} and comparing the terms of their first derivatives with respect to $\theta$. In this way, we obtain, 
\begin{align*}
\left.\frac{\partial f(z)}{\partial z}\right \vert_{z=z^*} \left(\left.\frac{\partial H}{\partial \theta}\right \vert_{\theta'} \; z^*+h_0\right) \; (\theta-\theta') = 0
\end{align*}
where $\frac{\partial f(z)}{\partial z}\ \vert_{z=z^*}=A(z^*)$ and $h_0+\frac{\partial H}{\partial \theta}\vert_{\theta=\theta^{' }}\; z^*=v(z^*)$ (by the equivalence relation~\eqref{eq: equi-class}), hence we recover $A(z^*)\, v(z^*)=0$.

Theorem \ref{thm: main} specifies a parameterized forward invariant set representing the {\em contraction region}, see \cite{lohmiller1998contraction} for solutions of the power system model \eqref{eq: non-lin}, characterized by small distance of angles, DC voltages (and thus frequency), and AC signal to the subspace $\text{span}\{v(z^*)\}$ and hence to the set $[z^*]$. For similar conditions, considering reduced-order power system models, we refer the reader to phase cohesiveness in\cite[Theorem 4.1]{weiss2019stability} and frequency boundedness in \cite[Lemma 4.1]{zhu2018stability}.

\subsection{Equilibria of interest}

We are  interested in those equilibria $z^*\in\mathcal{M}$ that verify the steady state condition \eqref{eq: are-cdt}. This condition can be evaluated in a fully decentralized fashion and is dependent on the converter's resistance $R$, modulation amplitude $\mu\in[0,1]$, nominal DC voltage $v_{dc}^*$ and reactive power output $Q_{sw,k}^*$. In particular, condition \eqref{eq: are-cdt} requires sufficient reactive power support and resistive damping, which are well-known practical stability conditions \cite{wang2015virtual}. 
In addition to virtual impedance and current measurement (see \cite[Remark 2]{jouini2019parametric}), constant reactive power load, set in parallel with the load conductance $G>0$, can equivalently be considered to satisfy \eqref{eq: are-cdt}.

\subsection{Link to other stability theories}

Our stability analysis finds roots in concepts of partial contraction theory  \cite{russo2011symmetries, lohmiller1998contraction,wang2005partial} (or termed semi-contraction  \cite{lohmiller1998contraction}), allowing to extend the application of contraction analysis, to include convergence to {\em behaviors}, e.g., convergence to an equilibrium manifold. This can be interpreted as the contraction of the linearized trajectories in all directions up to that of the linear subspace $\mathrm{span}\{v(z^*)\}$, see \cite[Example 4.2]{wang2005partial}. In fact, the symmetric part of the Jacobian projected  into the orthogonal complement of $\mathrm{span}\{v(z^*)\}$), given by $-\left(\Pi \left(\frac{\partial f}{\partial z}\right)+\left(\frac{\partial f}{\partial z}\right)^\top\Pi\right)${\tb,} is positive definite with respect to $\mathrm{span}\{v(z^*)\}$, for trajectories initialized on  $\mathcal{C}_\epsilon$.

\section{Simulative example}
\label{sec: sims}
We consider two identical DC/AC converter model in closed-loop with the matching control  and connected via an RL line, as in \eqref{eq: multi-node}. The network setup and parameters can be found in the Table \ref{table_example}. 
\begin{table}[h!]
	\begin{center}
		\begin{tabular}{|l||l|l|l|}
			\hline
			& Converter $1$ & Converter $2$ & RL Line\\
			\hline\hline
			$i_{dc}^*$  &$37.23$ & $ 37.23$ & --\\
			$v_{dc}^*$ & $1000$ & $1000$& -- \\
			$C_{dc}$ &  $10^{-3} $& $10^{-3}$& --\\
			$G_{dc}$ & $10^{-5} $ & $10^{-5} $& --\\
			$\eta$ & $0.0003142$  & $0.0003142$ & -- \\
			$L$ & $5\cdot10^{-4}$ & $5\cdot10^{-4}$ &-- \\
			$C$ & $10^{-5}$ & $10^{-5}$ &-- \\
			$\mu^*$ & $0.33$ & $0.33$ &-- \\
			$G$ & $0.01$ & $0.01$ & -- \\  
			$b$ & $1.08$ & $1.08$ & -- \\  
			$R_{}$ & $0.2$&$0.2$ &-- \\
			$K_{p}$ & $0.099 $ & $0.099 $ & -- \\
			$R_{net}$ &--  &-- & $0.2$ \\
			$L_{net}$ & -- &-- & $5\cdot 10^{-5} $\\
			\hline
		\end{tabular}
	\end{center}
	\caption{Parameter values of the two DC/AC converters~(in p.u).}
	\label{table_example}
\end{table}

By choice of the current source $i_{dc}^*$, Assumption \ref{ass: feasibility} is verified.
Since the synchronous equilibrium satisfies the algebraic condition in \eqref{eq: are-cdt} after adding a reactive load $b=1.08$, we  numerically find an estimate of the region of contraction $\mathcal{C}_\epsilon$, defined by \eqref{eq: roc} in a systematic way following estimate in \eqref{eq: roc}. 

Figure \ref{fig: roc} depicts the region of contraction of the two DC/AC converter angles (in rad) and the convergence of angle solutions to the subspace $\mathds{1}_2$, for $\epsilon=3.5,$ resulting from varying the initial angles, while keeping the remaining initial states fixed and showing the convergence to equilibrium manifold as predicted by our theory. Hereby, we notice in particular the synchronization of DC voltages, and that AC signals remain close to their steady state values.
A large range of bounded disturbances (estimated by transient power values) can be considered in our simulations, despite the conservativeness of the estimate of the region of contraction. 
\begin{figure}
	\label{fig: roc}
	\centering
	\includegraphics[scale=0.3]{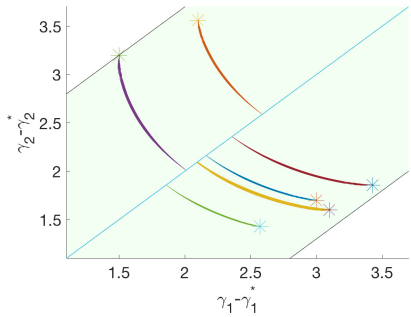}
	\caption{Region of contraction of the two-DC/AC converter angles (in rad) and convergence of the sample angle solutions of \eqref{eq: non-lin} to the subspace $\mathds{1}_2$, for $\epsilon=3.5$, and resulting from varying the initial angles, while keeping the remaining initial states fixed. The initial conditions of angle deviations $\gamma_1-\gamma_1^*$ and $\gamma_2-\gamma_2^*$ are denoted by the different stars and all the angles are in rad.}
	
\end{figure}

\section{CONCLUSIONS}
We considered local convergence of a multi-converter power system model. The symmetry of the vector field allowed for the adoption of a Lyapunov based framework with considerations in the quotient space. Our Lyapunov function is a distance measure from the  solution of the power system model to the synchronous equilibrium manifold that shrinks under sufficient and fully decentralized conditions, for trajectories initialized on region of the space characterized by small distance to the tangent space of the rotational invariance. Our numerical simulations validate our results. The scope of future
investigations includes extensive numerical estimations of the region of contraction and investigation of the conservativeness of our estimate.

\addtolength{\textheight}{-12cm}   




\bibliographystyle{IEEEtran}
\bibliography{root}

\end{document}